\newcommand{\Fcal}{\mathcal{F}}
\newcommand{\Gcal}{\mathcal{G}}
\newcommand{\Tcal}{\mathcal{T}}
\newcommand{\Z}{\mathbb{Z}}
\newcommand{\N}{\mathbb{N}}
\newcommand{\al}{\alpha}
\newcommand{\del}{\delta}
\newcommand{\Del}{\Delta}
\newcommand{\ep}{\epsilon}
\newcommand{\ol}{\overline}
\newcommand{\br}{\vspace{3 mm}}
\newcommand{\imp}{\Rightarrow}
\newcommand{\cls}{{\rm{cls\,}}}
\newcommand{\id}{{\rm{id}}}
\newcommand{\Homeo}{\rm{Homeo\,}}
\theoremstyle{plain}
\newtheorem{thm}{Theorem}[section]
\newtheorem{cor}[thm]{Corollary}
\newtheorem{lem}[thm]{Lemma}
\newtheorem{prop}[thm]{Proposition}
\theoremstyle{definition}
\newtheorem{defn}[thm]{Definition}
\newtheorem{rmk}[thm]{Remark}
\numberwithin{thm}{section}
\begin{document}

\title[]
{
$RP^{[d]}$ is an equivalence relation\\ 
An enveloping semigroup proof}

\author{Eli Glasner}

\address{Department of Mathematics\\
     Tel Aviv University\\
         Tel Aviv\\
         Israel}
\email{glasner@math.tau.ac.il}

\thanks{I thank the Hausdorff Institute at Bonn for the opportunity to participate in the 
Program ``Universality and Homogeneity" where this work was written,
November 2013.}

\keywords{regionally proximal relation of order $d$, enveloping semigroup,
minimal flow, idempotent}

\subjclass[2010]{37B05, 37B20}

\begin{date}
{December 9, 2013}
\end{date}

\begin{abstract}
We present a purely enveloping semigroup proof of a theorem of Shao and Ye which asserts that for an abelian group $T$, a minimal flow $(X,T)$
and any integer $d \ge 1$,
the regional proximal relation of order $d$ is an equivalence relation.
\end{abstract}

\maketitle

\section*{}
Let $T$ be a countable abelian group and let $(X,T)$ be a minimal
flow; i.e. $X$ is a compact Hausdorff space and $T$ acts
on it as a group of homeomorphisms in such a way that for each $x \in X$
its $T$-orbit, $Tx =\{tx : t \in T\}$, is dense in $X$. 
Following \cite{HKM} and \cite{SY} we introduce the following notations
(generalizing from the case $T = \Z$ to the case of a general $T$ action).
For an integer $d \ge 1$ let $X^{[d]} = X^{2^d}$. We index the coordinates of an element
$x \in X^{[d]}$ by subsets $\ep \subset \{1,\dots,d\}$. Thus
$x = (x_\ep : \ep \subset \{1,\dots,d\})$, where for each $\ep$, $x_\ep \in X_\ep = X$. 
E.g. for $d =2$ we have $x = (x_\emptyset, x_{\{1\}}, x_{\{2\}}, x_{\{1,2\}})$.

Let $\pi_* : X^{[d]} \to X^{2^d -1}$ denote the projection onto the
last $2^d-1$-coordinates; i.e., the map which forgets the $\emptyset$-coordinate.
Let $X_*^{[d]} = \pi_*(X^{[d]}) = X^{2^d -1} = \prod \{X_\ep : \ep \not = \emptyset\}$
and for $x \in X^{[d]}$ let $x_* = \pi_*(x) \in X_*^{[d]}$ denote its projection; i.e.
$x_*$ is obtained by omitting the $\emptyset$-coordinate of $x$.
For each $\ep \subset \{1,\dots,d\}$ we denote by $\pi_\ep$ the projection map 
from $X^{[d]}$ onto $X_\ep =X$.
For a point $x \in X$ we let $x^{[d]} \in X^{[d]}$ and $x_*^{[d]} \in X_*^{[d]}$
be the {\em diagonal points} all of whose coordinates are $x$.
$\Del^{[d]} =\{x^{[d]}: x \in X\}$ is the {\em diagonal} of $X^{[d]}$
and $\Del_*^{[d]} =\{x_*^{[d]}: x \in X\}$ the {\em diagonal} of $X_*^{[d]}$.
Another convenient representation of $X^{[d]}$ is as a product space
$X^{[d]} = X^{[d-1]} \times X^{[d-1]}$ (with $X^{[0]} = X$).
When using this decomposition we write $x =(x',x'')$. 
More explicitly, for
for $\ep \subset \{1, \dots, d -1\}$ let $\ep d = \ep \cup \{d\}$, and
define the identification
$X^{[d-1]}\times X^{[d-1]} \to X^{[d]}$ by $(x',x'') \mapsto x$ 
with $x_\ep = x'_\ep$ and $x_{\ep d} = x''_{\ep}$. 
% for $\ep \subset \{1, \dots , d -1\}$. 
We will refer to 
$x'$ and $x''$ as the first and second $2^{d-1}$ coordinates, respectively.

We next define two group actions on $X^{[d]}$,
the {\em face group action} $\Fcal_d$ and the {\em total group action} $\Gcal_d$.
These actions are representations of $T^d = T \times T \times \cdots \times T$ ($d$ times) and 
$T^{d+1}$, respectively,
as subgroups of ${\Homeo}(X^{[d]})$.
% For $(t_1,\dots,t_d) \in T^d$ and $x \in X^{[d]}$ we let
For the $\Fcal_d$ action, 
% $T^d$ act on $X^{[d]} = X^{2^d}$, \ 
$\Fcal_d  \times X^{[d]} \to X^{[d]}$,
$$
((t_1, \dots ,t_d),\ (x_\ep : \ep \subset \{1,\dots ,d\})) 
\mapsto (t_\ep x_\ep : \ep \subset \{1,\dots ,d\}), 
$$
where 
$t_\ep x_\ep  = t_{n_k} \cdots t_{n_1} x_\ep,
\ {\text{if}}\ \ep=\{n_1,\dots ,n_k\}$ and $t_\emptyset x_\emptyset = x_\emptyset$.
We can then represent the homeomorphism $\tau \in \Fcal_d$ which corresponds to 
$(t_1, \dots ,t_d) \in T^d$
as 
$$
\tau= \tau^{[d]}_{(t_1, \dots ,t_d)} = (t_\ep : \ep \subset \{1,\dots,d\}).
$$
We will also consider the restriction of the $\Fcal_d$ action to $X_*^{[d]}$ which is defined by omitting the $\emptyset$ coordinate.
Note that under the action of $\Fcal_d$ on $X^{[d]}$  the 
$\emptyset$-coordinate is left fixed.

For example, if we consider a minimal cascade $(X,f)$, taking $T = \Z = \{f^n : n \in \Z\}, d =3$ 
and $\tau = \tau^{[3]}_{(2,5,11)} \in \Fcal_3 \cong \Z^3$, we have:
$$
\tau(x) =
(x_\emptyset, f^2x_{\{1\}}, f^5 x_{\{2\}}, f^{2 +5} x_{\{1,2\}}, f^{11} x_{\{3\}}, f^{2 +11} x_{\{1,3\}},
f^{2 + 5 + 11} x_{\{1,2,3\}}),
$$ 
and
$$   
\tau(x_*) =
(f^2x_{\{1\}}, f^5 x_{\{2\}}, f^{2 +5} x_{\{1,2\}}, f^{11} x_{\{3\}}, f^{2 +11} x_{\{1,3\}},
f^{2 + 5 + 11} x_{\{1,2,3\}}).
$$

Note that the fact that the $\Fcal_d$ action is well defined depends on the 
commutativity of the group $T$. 

The action of $T^{d+1}$ on $X^{[d]}$, denoted by $\Gcal_d$, is the action generated by
the face group action $\Fcal_d$ and the {\em diagonal $\theta$-action} of $T$,
$T \times X^{[d]} \to X^{[d]}$, defined by
$$
(t, x) \mapsto \theta^{[d]}_t x = (tx_\ep : \ep \subset \{1,\dots,d\}).
$$
Thus for the $\Gcal_d$ action 
% $T^d$ act on $X^{[d]} = X^{2^d}$, \ 
$\Gcal_d  \times X^{[d]} \to X^{[d]}$,
$$
((t_1, \dots ,t_d, t_{d+1}),\ (x_\ep : \ep \subset \{1,\dots ,d\})) 
\mapsto (t_{d+1}t_\ep x_\ep : \ep \subset \{1,\dots ,d\}), 
$$
where 
$t_\ep x_\ep  = t_{n_k} \cdots t_{n_1} x_\ep,
\ {\text{if}}\ \ep=\{n_1,\dots ,n_k\}$ and $t_\emptyset x_\emptyset = x_\emptyset$.
In other words, the $\Gcal_d$ action on $X^{[d]}$ is given by the representation:
$$
T^{d+1} \to {\Homeo}(X^{[d]}), \qquad (t_1,\dots,t_d, t_{d+1}) \mapsto 
\theta^{[d]}_{t_{d+1}} \tau^{[d]}_{(t_1, \dots ,t_d)}.
$$

Notice that 
\begin{equation}\label{tautheta}
\tau^{[d]}_{(t_1, \dots ,t_d)} (x',x'') = (\tau^{[d-1]}_{(t_1, \dots ,t_{d-1})} x',
\theta^{[d-1]}_{t_d} \tau^{[d-1]}_{(t_1, \dots ,t_{d-1})} x'').
\end{equation}

\br

In their paper \cite{SY} Shao and Ye prove that $RP^{[d]}$, the generalized  
regionally proximal relation of order $d$, is always an equivalence relation for a minimal
cascade $(X,T)$.
Their proof is based on the detailed analysis of the $\Gcal_d$ action provided by Host Kra and Mass
in \cite{HKM},  where the authors treated the distal case. 
%In turn, the results obtained in \cite{HKM} are based on the profound ergodic theoretical
%results obtained by Host and Kra in \cite{HK}.
The main tool used by Shao and Ye is a theorem which asserts
that, for each $x \in X$, the action of the face group $\Fcal_d$ on
the orbit closure $\cls \Fcal_d x_*^{[d]}$ is minimal. 
Their proof of this latter theorem was based on the general structure theory of minimal flows due to
Ellis-Glasner-Shapiro \cite{EGS}, McMahon \cite{Mc} and Veech \cite{V}. 
%But in fact, unbeknown to them, I have already shown, a few years earlier, to Bernard Host and Bryna Kra
%(in a private conversation)
%a direct, enveloping semigroup, proof of this fact which is very similar to the proof by Ellis and Glasner given in \cite[page 46]{G}. 
Now, it turns out that there is a direct enveloping semigroup proof of this theorem 
which is very similar to the proof by Ellis and Glasner given in \cite[page 46]{G}. 
The possibility of applying the Ellis-Glasner proof as a shortcut to 
Shao and Ye's proof was also discovered by Ethan Akin.
In the next section I present this short proof, established for a general commutative group.
For the interested reader I will, in a subsequent section, reproduce the beautiful proof of Shao
and Ye of the fact that for each $d \ge 1$, $RP^{[d]}$ is an equivalence relation.

Let us note that all the results of this work extend to the case where
$T$ is  a commutative separable topological group which acts continuously and  minimally on a compact metric space $X$. In fact, both 
% we can always pick a dense countable subgroup 
minimality and the regionally proximal relations are the same for the group and for a countable dense subgroup.
Moreover, most of the results (like Theorem \ref{min}, parts (1) - (3), as well
as Theorem \ref{RP}) hold for actions of $T$ on a general compact space
(not necessarily metrizable).
I wish to thank Ethan Akin whose suggestions led to improvements of a first draft of this work.

\br

\section{The minimality of the face action on $Q_{x_*}^{[d]}$}

Let $(X,T)$ be a minimal flow with $T$ abelian. 
%Let $\Fcal_d$ act on $X_*^{[d]} = X^{2^d -1}$, \ 
%$\Fcal_d  \times X_*^{[d]} \to X_*^{[d]}$, as follows:
%$$
%((t_1, \dots ,t_d),\ (x_\ep : \ep \subset \{1,\dots ,d\},\ \ep\not= \emptyset)) 
%\mapsto (t_\ep x_\ep : \ep \subset \{1,\dots ,d\},\ \ep\not = \emptyset), 
%$$
%where 
%$t_\ep x_\ep  = t_{n_1} \cdots t_{n_j} x_\ep,
%\ {\text{if}}\ \ep=\{n_1,\dots ,n_j\}$.
%
Let 
\begin{gather*}
Q^{[d]}= \cls \{g x^{[d]} : x \in X, \ g \in \Gcal_d\} = 
\ol{\Gcal_d\Del^{[d]}}=
\ol{\Fcal_d\Del^{[d]}},\\
Q_x^{[d]} = Q^{[d]} \cap (\{x\} \times X^{2^d -1}), \ \ {\text{and}}\ \ 
Q_{x_*}^{[d]} = \pi_*(Q_x^{[d]}).
\end{gather*}
For each $x \in X$ let $Y_x^{[d]} = \ol{\Fcal_d(x^{[d]})} \subset Q_x^{[d]}$
be the orbit closure of $x^{[d]}$ under $\Fcal_d$.
Finally, 
%with $\pi_* : X^{[d]} \to X^{2^d -1}$ denoting the projection onto the
%last $2^{d-1}$-coordinates; i.e., the map
%which forgets the $\emptyset$-coordinate,
let $Y_{x_*}^{[d]} = \pi_*(Y_x^{[d]})$.
% Finally we let $Q_x^{[d]} = Q^{[d]} \cap (\{x\} \times X^{2^d -1})$. 
 
\begin{thm}[Shao and Ye]\label{min}
\begin{enumerate}
\item
The flow $(Q^{[d]},\Gcal_d)$ is minimal.
\item
For each $x \in X$, the flows $(Y_x^{[d]},\Fcal_d)$, and hence also
$(Y_{x_*}^{[d]},\Fcal_d)$, are minimal.
%\item
%For each $x \in X$, the flow $(Q_{x_*}^{[d]},\Fcal_d)$ is minimal.
\item
For each $x \in X$ the set
$Y_x^{[d]}$ is the unique minimal subflow of the $\Fcal_d$-flow 
$(Q_x^{[d]}, \Fcal_d)$. Hence also
$Y_{x_*}^{[d]}$ is the unique minimal subflow of the $\Fcal_d$-flow 
$(Q_{x_*}^{[d]}, \Fcal_d)$.
\item
\footnote{This seems to be a new observation.}
For a dense $G_\del$ subset $X_0 \subset X$ we have $Y_x^{[d]} = Q_x^{[d]}$.
\end{enumerate}
\end{thm}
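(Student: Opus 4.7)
The plan is to introduce the set-valued map $\Phi:X\to\Kcal(X^{[d]})$ given by $\Phi(x)=Y_x^{[d]}$ (the hyperspace carrying the Hausdorff metric), establish that $\Phi$ is lower semi-continuous, and take $X_0$ to be its set of Hausdorff-continuity points. By a classical Baire-category fact, $X_0$ will automatically be a dense $G_\del$ in $X$, so the main task reduces to verifying that continuity of $\Phi$ at $x$ forces $Y_x^{[d]}=Q_x^{[d]}$.

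For the LSC property I fix an arbitrary open $U\subset X^{[d]}$ and observe that, since $Y_x^{[d]}=\ol{\Fcal_d x^{[d]}}$,
\[
\{x\in X:Y_x^{[d]}\cap U\neq\emptyset\}=\bigcup_{\tau\in\Fcal_d}\{x\in X:\tau x^{[d]}\in U\}.
\]
Each set $\{x:\tau x^{[d]}\in U\}$ is the preimage of the open set $U$ under the continuous composition $x\mapsto\tau x^{[d]}$, hence open, and the union is open, which is exactly the LSC condition. The standard argument (the Hausdorff oscillation of an LSC map into a compact metric hyperspace is itself upper semi-continuous, and its zero-set is a dense $G_\del$) then gives that $X_0$ is a dense $G_\del$ in $X$.

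To finish, fix $x\in X_0$ and $y\in Q_x^{[d]}$; I show $y\in Y_x^{[d]}$. By minimality of $(Q^{[d]},\Gcal_d)$ from part~(1) there is a sequence $g_n=\theta^{[d]}_{s_n}\tau_n\in\Gcal_d$ with $g_nx^{[d]}\to y$. The commutativity of $T$ makes $\theta^{[d]}$ commute with every element of $\Fcal_d$, so
\[
g_nx^{[d]}=\theta^{[d]}_{s_n}(\tau_n x^{[d]})\in\theta^{[d]}_{s_n}Y_x^{[d]}=Y_{s_nx}^{[d]}.
\]
Projecting onto the $\emptyset$-coordinate, $s_nx=\pi_\emptyset(g_nx^{[d]})\to\pi_\emptyset(y)=x$. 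Setting $x_n:=s_nx$, we have $y\in\limsup_n Y_{x_n}^{[d]}$ with $x_n\to x$, and continuity of $\Phi$ at $x$ forces $\limsup_n Y_{x_n}^{[d]}\subset Y_x^{[d]}$, so $y\in Y_x^{[d]}$.

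The only delicate ingredient is the upper-semi-continuity half of continuity of $\Phi$ at points of $X_0$; the lower-semi-continuity follows directly from the definition of $Y_x^{[d]}$, and the identity $Y_{s_nx}^{[d]}=\theta^{[d]}_{s_n}Y_x^{[d]}$ is just the commutation of the face and diagonal actions, which again relies on commutativity of $T$. Without the passage through the continuity points $X_0$ the accumulation points $\lim g_nx^{[d]}$ could a priori lie in a proper superset of $Y_x^{[d]}$ inside $Q_x^{[d]}$, so the Baire-category step is what does the real work.
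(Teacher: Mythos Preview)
Your proposal addresses only part~(4), taking parts~(1)--(3) as given; for part~(4) your argument is correct and follows essentially the same route as the paper's: define $\Phi(x)=Y_x^{[d]}$, verify lower semi-continuity, take $X_0$ to be the (dense $G_\del$) set of continuity points of $\Phi$, and then show $Q_x^{[d]}\subset Y_x^{[d]}$ at those points by an approximation argument. The only cosmetic difference is that the paper obtains the approximating sequence directly from the definitional density of $\Fcal_d\Del^{[d]}$ in $Q^{[d]}$ (so part~(1) is not actually needed here), whereas you route through the $\Gcal_d$-minimality of $Q^{[d]}$; both yield points in $Y_{x_n}^{[d]}$ with $x_n\to x$, and your more explicit write-up of the LSC verification and of the limiting step is a welcome expansion of what the paper leaves to the reader.
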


\begin{proof}
1. \ 
Let us denote $N := Q^{[d]}$ and $\Tcal : = \Gcal_d$.
Let $E = E(N, \Tcal)$ be the enveloping semigroup of $(N, \Tcal)$. 
Let $\pi_\ep : N \to X_\ep = X$ be the projection of $N$ on the $\ep$ coordinate, where 
$\ep\subset \{1,...,d\}$. 
We consider the action of the group $\Tcal$ on the $\ep$ coordinate via the projection $\pi_\ep$, 
that is, for $\ep  \subset \{1,\dots,d\}, (t_1,\dots ,t_d, t_{d+1}) \in T^{d+1}$ and $x \in X_\ep = X$,
$$
\Tcal \times X_\ep \to X_\ep, \quad  (\theta^{[d]}_{t_{d+1}}\tau^{[d]}_{(t_1,\dots ,t_d)},x)
\mapsto t_{d+1}t_\ep x.
$$
With respect to this action of $\Tcal$ on $X_\ep = X$ the map 
$\pi_\ep : (N,\Tcal) \to (X_\ep,\Tcal)$ is a flow homomorphism. 
Let $\pi_\ep^\bullet : E(N,\Tcal) \to E(X_\ep, \Tcal)$ be the corresponding homomorphism of enveloping semigroups. 
Notice that for the action of $\Tcal$ on $X_\ep$, 
$E(X_\ep,\Tcal) = E(X,T)$ as subsets of $X^X$ (as $t_{d+1}t_\ep \in T$). 

%By definition of $(X^{[d]},\Tcal)$, it is easy to verify that for 
%$p_1 ,...,p_d \in E(X,T)$, the map
%$(q_\ep : \ep \subset  \{1,\dots,d\}, \ \ep\not=\emptyset)$ is an element of $E(N,T^{\times  d})$, where $q_\ep = p_{n_1} \cdots p_{n_j}$ 
%if $\ep = \{n_1,\dots, n_j\}$ (using that fact that $L_p : E(X,T) \to E(X,T),
%\ q \mapsto qp$ is continuous for any $p$ and $R_p : E(X, T )\to E(X, T ), 
% \ q \mapsto pq$ is continuous for any continuous $p$). 
 
Let now $u \in E(X,T)$ be any minimal idempotent. Then 
$\tilde u = (u,u,...,u) \in E(N,\Tcal)$.
Choose $v$ a minimal idempotent in the closed left ideal 
$E(N, \Tcal)\tilde u$. Then $v\tilde u = v$. We want to show that 
$\tilde uv = \tilde u$. 
Set, for $\ep \subset \{1,\dots,d\}, \ v_\ep = \pi_\ep^\bullet v$. 
Note that, as an element of $E(N,\Tcal)$ is determined by its projections, 
it suffices to show that for each $\ep, \ uv_\ep = u$.
Since for each $\ep$ the map $\pi_\ep^\bullet$ is a semigroup homomorphism, we have that $v_\ep u = v_\ep$ as $v\tilde u = v$. In particular we deduce that 
$v_\ep$ is an idempotent belonging to the minimal left ideal 
$E(X_\ep,T)u=E(X,T)u$ which contains $u$. This implies (see \cite[Exercise 1.23.2.(b)]{G}) that
$$
 uv_\ep  = u, 
$$
and it follows that indeed $\tilde u v = \tilde u$. Thus, $\tilde u$ is an element of the minimal left ideal 
$E(N,\Tcal)v$ which contains $v$, and therefore $\tilde u$ is a minimal idempotent of 
$E(N, \Tcal)$.

Now let $x \in X$ and let $u$ be a minimal idempotent in $E(X,T)$ with 
$ux = x$ (since $(X,T)$ is minimal there always exists such an idempotent). 
By the above argument, $\tilde u$ is also a minimal idempotent of $(N,\Tcal)$ which implies that 
$N = Q^{[d]}$, the orbit closure of $x^{[d]} = \tilde u x^{[d]}$, 
is $\Tcal$ minimal (see \cite[Exercise 1.26.2]{G}). 

\br

2.\ 
Given $x \in X$ we now let $N := Q_{x_*}^{[d]}$ and $\Tcal : = \Fcal_d$.
The proof of the minimality of the flow $(Q_{x_*}^{[d]},\Fcal_d)$ is almost verbatim the same,
except that here the claim that for $u$ a minimal idempotent in $E(X,T)$, the map 
$\tilde u = (u,u,...,u)$ ($2^d -1$ times) is in  $E(Q_{x_*}^{[d]}, \Fcal_d)$, is not that evident. 
However, as $u$ is an idempotent this fact follows from the following lemma
(with $p_1= \cdots = p_d =u$). 
 
\begin{lem}
Let 
% By definition of $(X^{[d]},\Fcal_d)$, it is easy to verify that for 
$p_1,\dots , p_d \in E(X,T)$ and for $\ep =\{n_1,\dots, n_k\} \subset \{1,\dots,d\}$, 
with $n_1< \cdots < n_k$,
let $q_\ep = p_{n_k} \cdots p_{n_1}$.  
Then the map
$(q_\ep : \ep \subset  \{1,\dots,d\}, \ \ep\not=\emptyset)$ is an element of $E(Q_{x_*}^{[d]}, \Fcal_d)$.
\end{lem}
%, 
%where $q_\ep = p_{n_1} \cdots p_{n_k}$ 
%if $\ep = \{n_1,\dots, n_j\}$ (using that fact that $L_p : E(X,T) \to E(X,T),
%\ q \mapsto qp$ is continuous for any $p$ and $R_p : E(X, T )\to E(X, T ), 
% \ q \mapsto pq$ is continuous for any continuous $p$). 
%

\begin{proof}
By induction on $d$, using the identity (\ref{tautheta}), or more specifically
$$
\tau^{[d]}_{(e, \dots,e ,t_d)} (x', x'') = ( x',\theta^{[d-1]}_{t_d}  x''),
$$
and the fact that right multiplication in $E(X,T)$
% $E(Q_{x_*}^{[d]}, \Fcal_d)$ 
is continuous.
\end{proof}

 \br
 
 3.\
 We first reproduce the ingenious ``useful lemma" from \cite{SY}.
 
\begin{lem}\label{use}
%Let (X, T ) be a minimal system and let d ³ 1 be an integer. 
If $(x^{[d-1]}, w) \in Q^{[d]}$ for some $x \in X$ and $w \in X^{[d-1]}$ and $(x^{[d-1]}, w)$ is an
$\Fcal_d$-minimal point, then $(x^{[d-1]}, w) \in Y_x^{[d]}$.
\end{lem}

\begin{proof}

Since $(x^{[d-1]}, w) \in Q^{[d]}$ 
% and $(Q^{[d]},\Gcal_d)$ is a minimal flow, by part (1) of the proof, 
it follows that $(x^{[d-1]}, w)$ is in the $\Gcal_d$-orbit closure of $x^{[d]}$,
i.e. there is a sequence 
$\{(t_{1k},  \dots, t_{dk}, t_{d+1k})\}_{k \in \N} \subset T^{d+1}$ 
such that
$$
\theta^{[d]}_{t_{d+1k}}\tau^{[d]}_{(t_{1k},  \dots, t_{dk})}(x^{[d]}) \to (x^{[d-1]}, w).
$$
Now
\begin{gather*}
\theta^{[d]}_{t_{d+1k}}\tau^{[d]}_{(t_{1k},  \dots, t_{dk})}(x^{[d]}) = \\
%\theta^{[d-1]}_{t_{dk}}(\tau^{[d]}_{(t_{1k}, t_{2k}, \dots, t_{d,k})}(x^{[d-1]})= \\
\theta^{[d]}_{t_{d+1k}}(\id^{[d-1]} \times \theta^{[d-1]}_{t_{dk}})
(\tau^{[d-1]}_{(t_{1k},  \dots, t_{d-1k})}(x^{[d-1]}),
\tau^{[d-1]}_{(t_{1k},  \dots, t_{d-1k})}(x^{[d-1]})) =\\
(\id^{[d-1]} \times \theta^{[d-1]}_{t_{dk}})
(\theta^{[d-1]}_{t_{d+1k}}\tau^{[d-1]}_{(t_{1k},  \dots, t_{d-1k})}(x^{[d-1]}),
\theta^{[d-1]}_{t_{d+1k}}\tau^{[d-1]}_{(t_{1k},  \dots, t_{d-1k})}(x^{[d-1]})),
%\\
%: = (\id^{[d-1]} \times \theta^{[d-1]}_{t_{dk}})(a_k,a_k) \to (x^{[d-1]}, w).
\end{gather*}
and letting $a_k := \theta^{[d-1]}_{t_{d+1k}}\tau^{[d-1]}_{(t_{1k},  \dots, t_{d-1k})}(x^{[d-1]})$,
we have:
\begin{equation}\label{ak}
(\id^{[d-1]} \times \theta^{[d-1]}_{t_{dk}})(a_k,a_k) \to (x^{[d-1]}, w).
\end{equation}

Let 
\begin{gather*}
\pi_1 : (X^{[d]}, \Fcal_d) \to (X^{[d-1]}, \Fcal_d), \qquad	(x', x'') \mapsto x',\\
\pi_2 : (X^{[d]}, \Fcal_d) \to (X^{[d-1]}, \Fcal_d), \qquad    (x', x'') \mapsto x'',
\end{gather*}
be the projections to the first and last $2^{d-1}$ coordinates respectively. 
For $\pi_1$ we consider the action of the group $\Fcal_d$ on $X^{[d-1]}$ via	the	representation
$$
\tau^{[d]}_{(t_1,  \dots, t_d)} \mapsto \tau^{[d-1]}_{(t_1,  \dots, t_{d-1})},
$$  
and for $\pi_2$ the action is via the representation
$$
\tau^{[d]}_{(t_1,  \dots, t_d)} \mapsto \theta^{[d-1]}_{t_d} \tau^{[d-1]}_{(t_1,  \dots, t_{d-1})}.
$$

Denote the corresponding semigroup homomorphisms of enveloping semigroups by 
$$
\pi_i^\bullet : E(X^{[d]}, \Fcal_d) \to E(X^{[d-1]}, \Fcal_d),	\quad i = 1,2.
%\qquad
%\pi_2^* : E(X^{[d]}, \Fcal_d) \to E(X^{[d-1]}, \Fcal_d).
$$
Notice that for these actions of $\Fcal_d$ on $X^{[d-1]}$, as subsets of 
${X^{[d]}}^{X^{[d]}}$,
$$
\pi_1^\bullet ( E(X^{[d]}, \Fcal_d)) = E(X^{[d-1]}, \Fcal_{d-1})	
\ \ {\text{and}} \ \ 
\pi_2^\bullet (E(X^{[d]}, \Fcal_d)) = E(X^{[d-1]}, \Gcal_{d-1}).
$$
Thus for $p \in E(X^{[d]}, \Fcal_d)$ and $x \in X^{[d]}$, 
we have: 
$$
px = p(x',x'') = (\pi_1^\bullet (p)x', \pi_2^\bullet (p)x'').
$$

Now fix a minimal left ideal $L$ of $E(X^{[d]}, \Fcal_d)$. 
By (\ref{ak}) $a_k \to x^{[d-1]}$ and, 
since $(Q^{[d-1]},\Gcal_{d-1})$ is minimal, there exist 
$p_k \in L$ such that $a_k = \pi_2^\bullet (p_k)x^{[d-1]}$. 
Without loss of generality we assume that $p_k \to p \in L$. Then
$$
\pi_2^\bullet (p_k)x^{[d-1]} = a_k  \to x^{[d-1]}\ \ {\text{and}} \ \ 	
\pi_2^\bullet (p_k)x^{[d-1]} \to  \pi_2^\bullet (p)x^{[d-1]}.
$$ 
Hence
\begin{equation}\label{pF}
\pi_2^\bullet (p)x^{[d-1]} = x^{[d-1]}.
\end{equation}
Since $L$ is a minimal left ideal and $p \in L$ there exists a minimal idempotent 
$v \in J(L)$ such that $vp = p$. 
Then 
$$
\pi_2^\bullet(v)x^{[d-1]} = \pi_2^\bullet (v)\pi_2^\bullet (p)x^{[d-1]}
=\pi_2^\bullet (vp)x^{[d-1]} =\pi_2^\bullet (p)x^{[d-1]} = x^{[d-1]}.
$$
Let
$$
F = \mathfrak{G}(\ol{\Fcal_{d-1}(x^{[d-1]})}, x^{[d-1]}) 
= \{\al \in  vL : \pi_2^\bullet (\al)x^{[d-1]} = x^{[d-1]}\}
$$ 
be the Ellis group of the pointed flow $(\ol{\Fcal_{d-1}(x^{[d-1]})}, x^{[d-1]})$.
Then $F$ is a subgroup of the group $vL$. 
By (\ref{pF}), we have $ p \in F$ and
since $F$ is a group, we have $pFx^{[d]} = Fx^{[d]} 
\subset \pi_2^{-1}(x^{[d]})$.
Since $vx^{[d]} \in Fx^{[d]} = pFx^{[d]}$, there is some $x_0 \in  Fx^{[d]}$ such that 
$vx^{[d]} = px_0$. Set $x_k = p_k x_0$, then
\begin{equation}\label{xk}
x_k = p_k x_0 \to px_0 = vx^{[d]} = (\pi_1^\bullet (v)x^{[d-1]},x^{[d-1]}).
\end{equation}
As $x_0 \in  Fx^{[d]}$, it follows that $\pi_2(x_0) = x^{[d-1]}$, hence
$$	
\pi_2(x_k) = \pi_2(p_k x_0) = \pi_2^\bullet (p_k)\pi_2(x_0) 
= \pi_2^\bullet (p_k)x^{[d-1]} = a_k \to  x^{[d-1]}.
$$

Let $x_k = (b_k, a_k) \in \ol{\Fcal_{d}(x^{[d]})}$; 
then, by (\ref{xk}), 
$\lim b_k = \pi_1^\bullet (v)x^{[d-1]}$. 
By (\ref{ak}),  
$\theta^{[d-1]}_{t_{dk}}  a_k \to w$, 
hence
$$
(\id^{[d-1]} \times \theta^{[d-1]}_{t_{dk}})(b_k, a_k) = (b_k, \theta^{[d-1]}_{t_{dk}} a_k) 
\to (\pi_1^\bullet (v)x^{[d-1]}, w).
$$
Since 
$\id^{[d-1]} \times \theta^{[d-1]}_{t_{dk}} = \tau^{[d]}_{(e,\dots,e,t_{dk})}
%\theta^{[d]}_{t_{dk}} 
\in \Fcal_d$ 
and 
$(b_k,a_k) \in  \ol{\Fcal_{d}(x^{[d]})}$, 
we have
\begin{equation}\label{v}
(\pi_1^\bullet (v)x^{[d-1]}, w) \in  \ol{\Fcal_{d}(x^{[d]})}.
\end{equation}
Since, by assumption, $(x^{[d-1]},w)$ is $\Fcal_d$ minimal,  there is some minimal
idempotent $u \in J (L)$ such that $u(x^{[d-1]}, w) = 
(\pi_1^\bullet (u)x^{[d-1]}, \pi_2^\bullet (u)w) = (x^{[d-1]}, w)$.
Since $u, v \in L$ are minimal idempotents in the same minimal left ideal $L$, we have $uv = u$. 
Thus
$u(\pi_1^\bullet (v)x^{[d-1]}, w) = 
(\pi_1^\bullet (u)\pi_1^\bullet (v)x^{[d-1]}, \pi_2^\bullet (u)w) = 
(\pi_1^\bullet (uv)x^{[d-1]}, w) = (\pi_1^\bullet (u)x^{[d-1]}, w) = (x^{[d-1]}, w)$.
By (\ref{v}), we have
$(x^{[d-1]}, w) \in   \ol{\Fcal_{d}(x^{[d]})}$ and the proof of the lemma is completed.	
\end{proof}

We are now ready to complete the proof of part (3) of the theorem. We assume by induction that
this assertion holds for every $ 1 \le j \le d-1$ and now, given $x \in X$, consider a minimal subflow
$Y$ of the flow $(Q_x^{[d]}, \Fcal_d)$. With notations as in the previous lemma, we observe that
$Y_1 = \pi_1(Y)$ is a minimal subflow of the flow  $(Q_x^{[d-1]}, \Fcal_{d-1})$ and therefore,
by the induction hypothesis $Y_1 = Y_x^{[d-1]} = \ol{\Fcal_{d-1} x^{[d-1]}}$.
But then for some $w \in  Q^{[d-1]}$ we have $(x^{[d-1]},w) \in Y$ and,
applying Lemma \ref{use}, we conclude that $(x^{[d-1]},w) \in Y_x^{[d]}$. Thus
 $Y = Y_x^{[d]}$ and the proof is complete. 
  
 \br
 
 4.\
 Let $2^{X^{[d]}}$ be the compact hyperspace consisting of the closed subsets of $X^{[d]}$
 equipped with the (compact metric) Vietoris topology. Let $\Phi : X \to  2^{X^{[d]}}$
 be the map $x \mapsto Y_x^{[d]}$. It is easy to check that this map is lower-semi-continuous
 (i.e. $x_i \to x \ \imp\ \liminf \Phi(x_i) \supset \Phi(x)$). It follows then that the set of continuity 
 points of $\Phi$ is a dense $G_\del$ subset $X_0 \subset X$ (see e.g. \cite{Ch}).
 Since the set $\Fcal_d\Del^{[d]}$ is dense in $Q^{[d]}$, it follows that at each point of
 $X_0$ we must have $Y_x^{[d]} = Q_x^{[d]}$.  
 \end{proof}

\br

\section{$RP^{[d]}$ is an equivalence relation}

In this section we outline the Shao-Ye proof that $RP^{[d]}$ is an equivalence relation.
We assume that $(X,T)$ is a minimal compact {\em metrizable} $T$-flow, where
$T$ is an abelian group. We fix a compatible metric $\rho$ on $X$.

\begin{defn}\label{def-eq}
The {\em regionally proximal relation of order $d$}
is the relation $RP^{[d]} \subset X^{[d]} \times X^{[d]}$ defined by the following condition:
$(x,y) \in RP^{[d]}$ iff for every $\del >0$ there is a pair $x', y' \in X$ and $(t_1,\dots,t_d)
\in T^d$ such that:
\begin{gather*}
\rho(x,x') < \del,  \ \  \rho(y,y') < \del \ \ {\text{and}} \\
\rho^{[d]}(\tau^{[d]}_{(t_1,\dots,t_d)}{x'}_*^{[d]},
\tau^{[d]}_{(t_1,\dots,t_d)}{y'}_*^{[d]}) : =
\sup \{\rho (t_\ep x', t_\ep y') : \ep \subset \{1,\dots,d\},\ \ep \not = \emptyset\} < \del.
\end{gather*}
For $d =1$ this relation is the classical {\em  regionally proximal relation}, see e.g. \cite{A}.
\end{defn}

%\begin{defn}\label{def-eq}
%The {\em regionally proximal relation of order $d$}
%is the relation $RP^{[d]} \subset X^{[d]} \times X^{[d]}$ defined by the following condition:
%$(x,y) \in RP^{[d]}$ iff for every $\del >0$ there is a pair $x', y' \in X$ and $(t_1,\dots,t_d)
%\in T^d$ such that:
%\begin{enumerate}
%\item
%$\rho(x,x') < \del$ and  $\rho(y,y') < \del$.
%\item
%For every $\emptyset \not = \ep \subset \{1,\dots,d\}$,
%$$
%\rho^{[d]}(\tau^{[d]}_{(t_1,\dots,t_d)}{x'}_*^{[d]},
%\tau^{[d]}_{(t_1,\dots,t_d)}{y'}_*^{[d]}) : =
%\sup \{\rho (t_\ep x', t_\ep y') : \ep \subset \{1,\dots,d\},\ \ep \not = \emptyset\} < \del.
%$$
%\end{enumerate}
%For $d =1$ this relation is the classical {\em  regionally proximal relation}, see e.g. \cite{A}.
%\end{defn}

A convenient characterization of $RP^{[d]}$ is provided by Host-Kra-Maass in \cite[Lemma 3.3]{HKM}. Among its implications one has the corollary that the relation $RP^{[d]}$ is preserved under factors (Corollary \ref{fac} below).

\begin{lem}\label{2.5}
Let $( X , T )$ be a minimal flow. Let $d \ge 1$ and $x , y \in X$ . Then
$(x,y) \in RP^{[d]}$ if and only if there is some $a_* \in  X_*^{[d]}$ 
such that $(x, a_* , y, a_* ) \in Q^{[d+1]}$. 
\end{lem}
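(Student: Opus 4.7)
My plan is to prove both implications directly from the definition $Q^{[d+1]} = \ol{\Fcal_{d+1}\Del^{[d+1]}}$, using only the commutativity of $T$ and the minimality of $(X,T)$; no deeper structure theory is needed.

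For the implication ($\Leftarrow$), suppose $(x, a_*, y, a_*) \in Q^{[d+1]}$. Pick sequences $p_n \in X$ and $(t_{1,n},\ldots,t_{d+1,n},s_n) \in T^{d+2}$ with
$$
\theta^{[d+1]}_{s_n}\, \tau^{[d+1]}_{(t_{1,n},\ldots,t_{d+1,n})}(p_n^{[d+1]}) \to (x, a_*, y, a_*).
$$
Set $x'_n := s_n p_n$ and $y'_n := t_{d+1,n} x'_n$ (equivalently $s_n t_{d+1,n} p_n$, since $T$ is abelian). Reading off the $\emptyset$- and $\{d+1\}$-coordinates yields $x'_n \to x$ and $y'_n \to y$. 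For every nonempty $\ep \subseteq \{1,\ldots,d\}$, the $\ep$-coordinate equals $t_{\ep,n} x'_n$ and the $(\ep \cup \{d+1\})$-coordinate equals $t_{\ep,n} y'_n$; both tend to $(a_*)_\ep$, so $\rho(t_{\ep,n} x'_n, t_{\ep,n} y'_n) \to 0$. This witnesses $(x,y) \in RP^{[d]}$.

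For the forward direction ($\Rightarrow$), for each $n \ge 1$ the definition supplies $x'_n, y'_n \in X$ and $(t_{1,n},\ldots,t_{d,n}) \in T^d$ with $\rho(x,x'_n), \rho(y,y'_n) < 1/n$ and $\max_{\emptyset \ne \ep \subseteq \{1,\ldots,d\}} \rho(t_{\ep,n} x'_n, t_{\ep,n} y'_n) < 1/n$. The main step is to exhibit, for each $n$, a point $w_n \in Q^{[d+1]}$ whose coordinates are exactly $x'_n$ at $\emptyset$, $t_{\ep,n} x'_n$ at each nonempty $\ep \subseteq \{1,\ldots,d\}$, $y'_n$ at $\{d+1\}$, and $t_{\ep,n} y'_n$ at $\ep \cup \{d+1\}$ for nonempty $\ep \subseteq \{1,\ldots,d\}$. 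To build $w_n$, use minimality of $(X,T)$ to choose $s_{n,m} \in T$ with $s_{n,m} x'_n \to y'_n$ as $m\to\infty$; then $\tau^{[d+1]}_{(e,\ldots,e,s_{n,m})}((x'_n)^{[d+1]}) \in \Fcal_{d+1}\Del^{[d+1]}$ has coordinate $x'_n$ on subsets not containing $d+1$ and $s_{n,m} x'_n$ on subsets containing $d+1$, so it converges (in $m$) to the ``half-diagonal'' point $z_n$ with coordinate $x'_n$ (resp.\ $y'_n$) on subsets not containing (resp.\ containing) $d+1$. Since $Q^{[d+1]}$ is closed, $z_n \in Q^{[d+1]}$, and $\Fcal_{d+1}$-invariance gives $w_n := \tau^{[d+1]}_{(t_{1,n},\ldots,t_{d,n},e)}(z_n) \in Q^{[d+1]}$ with the claimed coordinates.

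Finally, by compactness of $X_*^{[d]}$ I pass to a subsequence along which $t_{\ep,n} x'_n \to (a_*)_\ep$ for every nonempty $\ep \subseteq \{1,\ldots,d\}$; since $\rho(t_{\ep,n} x'_n, t_{\ep,n} y'_n) < 1/n$, also $t_{\ep,n} y'_n \to (a_*)_\ep$. Hence $w_n \to (x, a_*, y, a_*)$, which lies in $Q^{[d+1]}$ by closedness. The only obstacle is careful bookkeeping of how the $\Fcal_{d+1}$-action distributes across diagonal and ``half-diagonal'' points; once $z_n$ is correctly identified as a limit of face translates of a diagonal, the argument is straightforward.
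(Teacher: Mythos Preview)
Your proof is correct and follows essentially the same approach as the paper. The only minor variation is in the forward direction: the paper first uses minimality to replace $x',y'$ by points $sz,\,tsz$ lying on a \emph{single} $T$-orbit, so that the ``half-diagonal'' point is literally $\tau^{[d+1]}_{(e,\ldots,e,t)}\bigl((sz)^{[d+1]}\bigr)\in\Fcal_{d+1}\Del^{[d+1]}$ without a further limit, whereas you keep the original $x'_n,y'_n$ and obtain the half-diagonal $z_n$ as a limit (in $m$) of face-translates of $(x'_n)^{[d+1]}$; both are equally valid and the remaining steps coincide.
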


\begin{proof}
Suppose first that $(x,y) \in RP^{[d]}$. Fix an arbitrary point $z \in X$. Then, given $\del >0$,
we first find a pair $x', y' \in X$ and $(t_1,\dots,t_d)\in T^d$ which satisfy the requirements in
Definition \ref{def-eq}, and then replace $x'$ by $sz$ and  $y'$ by $tsz$, with appropriate $s, t \in T$,
so that
$\rho(x,sz) < \del, \ \   \rho(y,tsz) < \del$ and 
$$
\rho^{[d]}(\tau^{[d]}_{(t_1,\dots,t_d)}{(sz)}_*^{[d]},
\tau^{[d]}_{(t_1,\dots,t_d)}{(tsz)}_*^{[d]}) < \del.
$$
Denoting $a_{*\del} = {(sz)}_*^{[d]}$ we have
\begin{gather*}
\tau^{[d+1]}_{(t_1,\dots,t_d,t)}(sz,  a_{*\del}, sz, a_{*\del})=
(\tau^{[d]}_{(t_1,\dots,t_d)}(sz,  a_{*\del}), \theta_t^{[d]}\tau^{[d]}_{(t_1,\dots,t_d)}(sz,  a_{*\del}))
\in Q^{[d+1]}
\end{gather*}
Now, chose a convergent subsequence to get
$$
\lim_{\del \to 0} \tau^{[d+1]}_{(t_1,\dots,t_d,t)}(sz,  a_{*\del}, sz, a_{*\del}) =
(x, a_* , y, a_* ) \in Q^{[d+1]}.
$$ 
%$$
%\lim \tau^{[d]}_{(t_1,\dots,t_d)}(sz,  a_{*\del}) = (x, a_*) \ \ {\text{and}}\ \ 
%\lim \theta_t^{[d]}\tau^{[d]}_{(t_1,\dots,t_d)}(sz,  a_{*\del}) = (y, a_*),
%$$ 
%and thus to obtain $(x, a_* , y, a_* ) \in Q^{[d+1]}$.

Conversely, assume that there is some $a_* \in  X_*^{[d]}$ 
such that $(x, a_* , y, a_* ) \in Q^{[d+1]}$. 
Then, there exist sequences $x_n \in X$ and $F_n \in \Fcal_{d+1}$ such that
$$
F_n ((x_n)^{[d+1]}) \to (x, a_* , y, a_* ).
$$
Now $F_n$ has the form $F_n = (\tau^{[d]}_n, \theta^{[d]}_{t_n}\tau^{[d]}_n)$
with $t_n \in T$ and $\tau^{[d]}_n \in \Fcal_d$, so that $x_n \to x$ and $t_nx_n \to y$,
and it follows that $(x, y) \in RP^{[d]}$, as required.

%Then	for	every $ \del > 0$
%there exist $z \in X$ and $(t_1,\dots,t_d) \in T^d$, and $t \in T$ such that 
%$\rho(z,x) < \del, \ d(tz,y) < \del$ and 
%$\rho(t_\ep z,a_\ep) < \del$, \  $\rho(t t_\ep z,a_\ep) < \del$ for every nonempty 
%$\ep \subset \{1,\dots,d\}$. 
%Thus $(x, y) \in RP^{[d]}$, as required.
\end{proof}

It follows directly from the definition that $RP^{[d]}$ is a symmetric and $T$-invariant relation.
It is also easy to see that it is closed. However, even for $d=1$ there are easy examples which show
that, in general, it need not be an equivalence relation (not being transitive).
The remarkable fact that when $(X,T)$ is minimal, and $T$ is abelian, the relation $RP^{[1]}$ is an
equivalence relation (and therefore coincides with the equicontinuous structure relation; 
i.e., the smallest closed invariant relation $S \subset X \times X$ such that the quotient flow
$(X/S,T)$ is equicontinuous) is due to Ellis and Keynes \cite{EK} (see also \cite{Mc}).

\begin{cor}\label{fac}
If $\pi : (X,T) \to (Y,T)$ is a homomorphism of minimal $T$-flows then
$$
(\pi \times \pi)(RP^{[d]}(X))  \subset RP^{[d]}(Y).
$$
\end{cor}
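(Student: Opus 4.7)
My plan is to deduce the corollary directly from the characterization of $RP^{[d]}$ provided by Lemma \ref{2.5}, using the naturality of the construction $Q^{[d+1]}$ under factor maps.

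Given $\pi : (X,T) \to (Y,T)$, let $\pi^{[d+1]} : X^{[d+1]} \to Y^{[d+1]}$ denote the coordinate-wise map $(z_\ep)_\ep \mapsto (\pi(z_\ep))_\ep$. This map intertwines the two $\Gcal_{d+1}$-actions, since $\pi$ commutes with $T$ and the $\Fcal_{d+1}$ and $\theta$-actions are built coordinate-wise from $T$. Moreover, $\pi^{[d+1]}$ sends the diagonal $\Del^{[d+1]}(X)$ into $\Del^{[d+1]}(Y)$. Since $Q^{[d+1]}$ is by definition the closure of the $\Gcal_{d+1}$-orbit of the diagonal, continuity of $\pi^{[d+1]}$ gives
\[
\pi^{[d+1]}(Q^{[d+1]}(X)) \subset Q^{[d+1]}(Y).
\]

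Now suppose $(x,y) \in RP^{[d]}(X)$. By Lemma \ref{2.5} there exists $a_* \in X_*^{[d]}$ such that $(x, a_*, y, a_*) \in Q^{[d+1]}(X)$. Let $b_* \in Y_*^{[d]}$ be the image of $a_*$ under the coordinate-wise map induced by $\pi$ on $X_*^{[d]}$. Applying $\pi^{[d+1]}$ to $(x, a_*, y, a_*)$ yields
\[
(\pi(x), b_*, \pi(y), b_*) \in Q^{[d+1]}(Y).
\]
Invoking Lemma \ref{2.5} once more, this time in the flow $(Y,T)$, we conclude that $(\pi(x), \pi(y)) \in RP^{[d]}(Y)$, which is exactly the desired inclusion.

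There is no real obstacle here: the only thing to verify carefully is that $\pi^{[d+1]}$ is indeed a factor map of the $\Gcal_{d+1}$-actions, which is immediate from the commutativity of the coordinate-wise definitions of $\tau^{[d+1]}$ and $\theta^{[d+1]}$ with $\pi$. Everything else reduces to applying Lemma \ref{2.5} in both directions.
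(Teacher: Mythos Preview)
Your proof is correct and is precisely the argument the paper has in mind: the text introduces Lemma \ref{2.5} by noting that ``among its implications one has the corollary that the relation $RP^{[d]}$ is preserved under factors,'' and then states Corollary \ref{fac} without further proof. You have simply written out that implication explicitly.
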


Equipped with Theorem \ref{min} we will now show that for every $d \ge 1$ the relation $RP^{[d]}$ is an equivalence relation. 
First we prove two more necessary and sufficient conditions on a pair $(x,y) \in X \times X$
to belong to $RP^{[d]}$.

\begin{prop}\label{eq}
Let $( X , T )$ be a minimal flow and $d \ge 1$. 
The following conditions are equivalent:
\begin{enumerate}
\item
$ (x, y) \in RP^{[d]}$.
\item
$ (x, y, y,\dots, y) = (x, y_*^{[d+1]}) \in Q^{[d+1]}$.
\item
$ (x, y, y, \dots, y) = (x, y_*^{[d+1]}) \in  \ol{\Fcal_{d+1}(x^{[d+1]})}$.
\end{enumerate}
\end{prop}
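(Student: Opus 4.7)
The plan is to establish the chain $(1) \Rightarrow (2) \Rightarrow (3)$, together with the two easy implications $(3) \Rightarrow (2)$, which is immediate from $\Fcal_{d+1}(x^{[d+1]}) \subset Q^{[d+1]}$, and $(2) \Rightarrow (1)$, which follows from Lemma~\ref{2.5} applied with $a_* = y_*^{[d]}$.

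For $(1) \Rightarrow (2)$, I would begin by invoking Lemma~\ref{2.5} to produce $a_* \in X_*^{[d]}$ with $(x, a_*, y, a_*) \in Q^{[d+1]}$. The second-half projection $\pi_2 : X^{[d+1]} \to X^{[d]}$ is $\Fcal_{d+1} \to \Gcal_d$-equivariant and carries $Q^{[d+1]}$ into $Q^{[d]}$ (by formula~\eqref{tautheta}), so $(y, a_*) \in Q_y^{[d]}$ and hence $a_* \in Q_{y_*}^{[d]}$. By Theorem~\ref{min}(3) the unique $\Fcal_d$-minimal subflow of $Q_{y_*}^{[d]}$ is $Y_{y_*}^{[d]}$, and since every closed $\Fcal_d$-invariant subset of a compact flow must contain a minimal subflow, $Y_{y_*}^{[d]} \subset \ol{\Fcal_d a_*}$, giving $y_*^{[d]} \in \ol{\Fcal_d a_*}$. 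Choose $\tau_n = \tau^{[d]}_{(t_{1,n},\ldots,t_{d,n})} \in \Fcal_d$ with $\tau_n a_* \to y_*^{[d]}$; applying the lifted $\tau^{[d+1]}_{(t_{1,n},\ldots,t_{d,n},e)} \in \Fcal_{d+1}$ to $(x, a_*, y, a_*)$, formula~\eqref{tautheta} gives
\[
\tau^{[d+1]}_{(t_{1,n},\ldots,t_{d,n},e)}(x, a_*, y, a_*) = (x, \tau_n a_*, y, \tau_n a_*) \to (x, y_*^{[d+1]}),
\]
so this limit lies in $Q^{[d+1]}$.

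For $(2) \Rightarrow (3)$, I would pick a minimal idempotent $v \in E(X, T)$ with $vy = y$ (available since $(X,T)$ is minimal). The lemma inside the proof of Theorem~\ref{min}(2), applied with $p_1 = \cdots = p_{d+1} = v$ and using $v^2 = v$, supplies an element $\tilde v \in E(Q^{[d+1]}, \Fcal_{d+1})$ that acts as the identity on the $\emptyset$-coordinate and as $v$ on every other coordinate. Rerunning the enveloping-semigroup argument of Theorem~\ref{min}(1) verbatim, but with $\Fcal_{d+1}$ replacing $\Gcal_d$ --- projecting to each non-$\emptyset$ coordinate via $\pi_\ep^\bullet$, using $E(X_\ep, \Fcal_{d+1}) = E(X, T)$ for $\ep \not= \emptyset$, and invoking the standard fact that an idempotent lying in the minimal left ideal $E(X,T)v$ is absorbed by $v$ on the right --- shows that $\tilde v$ is itself a minimal idempotent of $E(Q^{[d+1]}, \Fcal_{d+1})$. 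Since $vy = y$, we have $\tilde v(x, y_*^{[d+1]}) = (x, y_*^{[d+1]})$, so this point is fixed by a minimal idempotent and is therefore $\Fcal_{d+1}$-minimal. By Theorem~\ref{min}(3), its $\Fcal_{d+1}$-orbit closure, being a minimal subflow of $Q_x^{[d+1]}$, must coincide with $Y_x^{[d+1]} = \ol{\Fcal_{d+1}(x^{[d+1]})}$, yielding $(x, y_*^{[d+1]}) \in \ol{\Fcal_{d+1}(x^{[d+1]})}$.

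The hard part will be the minimal-idempotent verification for $\tilde v$: although this is essentially a transcription of the Theorem~\ref{min}(1) argument, some bookkeeping is required because the $\emptyset$-coordinate is fixed by the action, so the projection argument runs only over the $2^{d+1}-1$ non-trivial coordinates. Once this verification is in hand, both non-trivial implications drop out cleanly.
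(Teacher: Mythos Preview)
Your argument is correct and follows the same architecture as the paper: use Lemma~\ref{2.5} to obtain $(x,a_*,y,a_*)\in Q^{[d+1]}$, push $a_*$ to $y_*^{[d]}$ along an $\Fcal_d$-orbit via Theorem~\ref{min}(3), establish that $(x,y_*^{[d+1]})$ is $\Fcal_{d+1}$-minimal, and then invoke Theorem~\ref{min}(3) once more to land in $\ol{\Fcal_{d+1}(x^{[d+1]})}$.

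The only substantive difference is in how you show $(x,y_*^{[d+1]})$ is $\Fcal_{d+1}$-minimal. You rerun the enveloping-semigroup argument to verify that $\tilde v=(\id,v,\dots,v)$ is a minimal idempotent in $E(Q^{[d+1]},\Fcal_{d+1})$. The paper instead simply cites Theorem~\ref{min}(2): since $y^{[d+1]}$ is $\Fcal_{d+1}$-minimal and $(x,y_*^{[d+1]})$ differs from it only in the $\emptyset$-coordinate (on which $\Fcal_{d+1}$ acts trivially), the two points have homeomorphic $\Fcal_{d+1}$-orbit closures, so $(x,y_*^{[d+1]})$ is minimal as well. Your route is sound, but it unpacks exactly the argument that Theorem~\ref{min}(2) already encapsulates; the paper's one-line appeal is shorter and avoids the $\emptyset$-coordinate bookkeeping you flag as ``the hard part.''
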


\begin{proof}

(3) $\imp$ (2) is obvious. The implication (2) $\imp$ (1) follows from Lemma \ref{2.5}. 
Thus it suffices to show that (1) $\imp$ (3).
Let $(x,y) \in RP^{[d]}$, then by Lemma \ref{2.5}, there is some $a	\in X^{[d]}$ 
such that $(x,a_* ,y,a_* ) \in Q^{[d+1]}$. 
Observe that $(y,a_*) \in Q^{[d]}$. 
By Theorem \ref{min}.(3), there is a sequence $\{F_k\} \subset \Fcal_d$ such that 
$F_k(y,a_*) \to  y^{[d]}$. 
Hence
$$
(F_k \times F_k) (x,a_*,y,a_*) \to (x, y_*^{[d]}, y, y_*^{[d]})=(x,y_*^{[d+1]}).
$$
Since
$F_k \times F_k \in \Fcal_{d+1}$ and $(x,a_*,y,a_*) \in Q^{[d+1]}$,
 we have that $(x, y_*^{[d+1]}) \in Q^{[d+1]}$. 
By Theorem \ref{min}.(2), $y^{[d+1]}$ is $\Fcal_{d+1}$-minimal. It follows that $(x, y_*^{[d+1]})$ 
is also $\Fcal_{d+1}$-minimal. 
Now $(x, y_*^{[d+1]}) \in Q^{[d+1]}[x]$ is $\Fcal_{d+1}$-minimal and by Theorem \ref{min}.(3), 
$\ol{\Fcal_{d+1}(x^{[d+1]})}$ is the unique $\Fcal_{d+1}$-minimal subset in 
$Q^{[d+1]}[x]$. Hence we have that $(x, y_*^{[d+1]}) \in \ol{ \Fcal_{d+1}(x^{[d+1]})}$, 
and the proof is completed.	 
\end{proof}

As an easy consequence of Proposition \ref{eq} we now have the following theorem.
\begin{thm}\label{RP}
Let $(X,T)$ be a minimal metric flow, where $T$ an abelian group, and $d \ge 1$. 
Then $RP^{[d]}$ is an equivalence relation.
\end{thm}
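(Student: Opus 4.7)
The plan is to check the three equivalence-relation axioms. Reflexivity follows at once from Definition~\ref{def-eq} (take $x'=y'=x$ with, say, every $t_i=e$), and symmetry is visibly built into the definition, since interchanging $x\leftrightarrow y$ and $x'\leftrightarrow y'$ preserves all the stated inequalities. Closedness and $T$-invariance are immediate from the definition as well. So the substantive content is transitivity.

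For transitivity, suppose $(x,y)\in RP^{[d]}$ and $(y,z)\in RP^{[d]}$. The strategy is to translate both hypotheses via Proposition~\ref{eq}, but using \emph{different} characterizations for each pair. For the first pair I would invoke characterization (2), which gives
$$
(x, y_*^{[d+1]}) \in Q^{[d+1]}.
$$
For the second pair I would use characterization (3), which supplies a sequence $F_k \in \Fcal_{d+1}$ with
$$
F_k\bigl(y^{[d+1]}\bigr) \longrightarrow (y, z_*^{[d+1]}).
$$
The decisive observation is that the $\Fcal_{d+1}$-action fixes the $\emptyset$-coordinate, so this last convergence is equivalent to $F_k\, y_*^{[d+1]} \to z_*^{[d+1]}$ in $X_*^{[d+1]}$. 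I would then apply the \emph{same} sequence $F_k$ to the point $(x, y_*^{[d+1]})$: again because $\Fcal_{d+1}$ leaves the $\emptyset$-coordinate alone,
$$
F_k\bigl(x, y_*^{[d+1]}\bigr) \;=\; \bigl(x,\, F_k\, y_*^{[d+1]}\bigr) \;\longrightarrow\; \bigl(x, z_*^{[d+1]}\bigr).
$$
Since $Q^{[d+1]} = \ol{\Fcal_{d+1}\Del^{[d+1]}}$ is closed and $\Fcal_{d+1}$-invariant, the limit lies in $Q^{[d+1]}$, and the implication $(2)\Rightarrow(1)$ of Proposition~\ref{eq} delivers $(x,z)\in RP^{[d]}$.

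The whole deduction is a two-line chase once Proposition~\ref{eq} is available; the main obstacle has been absorbed into that proposition, whose equivalence $(1)\Leftrightarrow(3)$ rests on the minimality results of Theorem~\ref{min}. The reason (3) is indispensable here is that an abstract point of $Q^{[d+1]}$ need not be reachable by an $\Fcal_{d+1}$-sequence starting from a diagonal point; only after upgrading the membership in $Q^{[d+1]}$ to membership in $\ol{\Fcal_{d+1}(y^{[d+1]})}$ can one re-route the same $\Fcal_{d+1}$-sequence to act on the companion point $(x, y_*^{[d+1]})$ and thereby chain the two relations.
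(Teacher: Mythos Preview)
Your proof is correct and follows essentially the same route as the paper: both reduce transitivity to Proposition~\ref{eq} and then exploit that $\Fcal_{d+1}$ fixes the $\emptyset$-coordinate to transport an $\Fcal_{d+1}$-sequence from one base point to another. The only difference is cosmetic: the paper applies characterization~(3) to \emph{both} pairs (using symmetry to get $(y,x_*^{[d+1]})\in\ol{\Fcal_{d+1}(y^{[d+1]})}$) and then invokes Theorem~\ref{min}(2) once more to place $(y,z_*^{[d+1]})$ in the orbit closure of $(y,x_*^{[d+1]})$, whereas by mixing (2) and (3) you avoid that extra minimality appeal and land in $Q^{[d+1]}$ instead of $\ol{\Fcal_{d+1}(x^{[d+1]})}$.
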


\begin{proof}
It suffices to show the transitivity, i.e. if $(x, y), (y, z) \in RP^{[d]}$, then 
$(x, z) \in RP^{[d]}(X)$. Since $(x, y), (y, z) \in RP^{[d]}$, by Proposition \ref{eq} we have
$$
(y, x, x,\dots , x), (y, z ,z ,\dots , z) \in \ol{\Fcal_{d+1}(y^{[d+1]})}.
$$
By Theorem \ref{min}.(2) 
$(\ol{\Fcal_{d+1}(y^{[d+1]})}, \Fcal_{d+1})$ is minimal, whence
$$
(y, z, z,\dots , z) \in \ol{\Fcal_{d+1}(y, x, x, \dots, x)}.
$$	
Finally, as $\Fcal_{d+1}$ acts as the identity on the $\emptyset$-coordinate,
it follows that also 
$$
(x, z, z, \dots , z)	\in	\ol{\Fcal_{d+1}(x^{[d+1]})}.
$$	
By	Proposition \ref{eq} again, $(x, z) \in RP^{[d]}$.	
\end{proof}

\begin{rmk}
From Proposition \ref{eq} we deduce that in the definition of the 
regionally proximal relation of order $d$
the point $x'$ can be replaced by $x$. More precisely, a pair  $(x, y) \in X \times X$
is in $RP^{[d]}$ if and only if 
for every $\del >0$ there is a point $y' \in X$ and $(t_1,\dots,t_d)
\in T^d$ such that:
\begin{gather*}
\rho(y, y') < \del \ \ {\text{and}} \\
\rho^{[d]}(\tau^{[d]}_{(t_1,\dots,t_d)}{x}_*^{[d]},
\tau^{[d]}_{(t_1,\dots,t_d)}{y'}_*^{[d]}) : =
\sup \{\rho (t_\ep x, t_\ep y') : \ep \subset \{1,\dots, d\},\ \ep \not = \emptyset\} < \del.
\end{gather*}
%\begin{enumerate}
%\item
%$\rho(y,y') < \del$.
%\item
%For every $\emptyset \not = \ep \subset \{1,\dots, d\}$,
%$$
%\rho^{[d]}(\tau^{[d]}_{(t_1,\dots,t_d)}{x}_*^{[d]},
%\tau^{[d]}_{(t_1,\dots,t_d)}{y'}_*^{[d]}) : =
%\sup \{\rho (t_\ep x, t_\ep y') : \ep \subset \{1,\dots,d\},\ \ep \not = \emptyset\} < \del.
%$$
%\end{enumerate}
Again for $d =1$ this is a well known result (see \cite{V} and \cite{Mc}).
\end{rmk}

\br

Let us conclude with the following remark.
It is not hard to see that the proximal relation $P \subset X \times X$ 
is a subset of $RP^d$ for each $d \ge 1$ (see Proposition 3.1 in \cite{HKM}).
Thus for every $d \ge 1$ the quotient flow $X/ PR^{[d]}$ is a minimal distal flow.
Of course the main result of Host, Kra and Maass in this work \cite{HKM} is the fact that, for $T = \Z$, this minimal distal factor flow
is the maximal factor of $(X,T)$ which is a {\em system of order $d-1$}; i.e.,
a $T$-flow which is an inverse limit of $(d-1)$-step minimal $T$-nilflows.
In turn, the results in \cite{HKM} are based on the profound analogous ergodic theoretical
theorems obtained by Host and Kra in \cite{HK}.

\end{document}